
\documentclass[12pt]{amsart}
\usepackage{amssymb,bbold}
\usepackage[all]{xy}

\textwidth = 440pt
\textheight = 630pt
\hoffset = -45pt
\voffset = -35pt
\marginparwidth = 58pt

\theoremstyle{plain}
\newtheorem{theorem}{Theorem}
\newtheorem{corollary}[theorem]{Corollary}

\newtheorem{proposition}[theorem]{Proposition}

\theoremstyle{definition}
\newtheorem{definition}[theorem]{Definition}
\newtheorem{remark}[theorem]{Remark}
\newtheorem{example}[theorem]{Example}






\begin{document}
\baselineskip 18pt

\title[A few remarks on boundedness]
{A few remarks on boundedness in topological modules and topological groups}
\author{Omid Zabeti}
\address{O. Zabeti, Department of Mathematics, Faculty of Mathematics, University of Sistan and Baluchestan,  P.O. Box 98135-674, Zahedan (Iran)}
\email{o.zabeti@gmail.com}
\subjclass[2010]{54H13, 16W80, 13J99, 54H11.} \keywords{Bigroup homomorphism, topological module, boundedness, topological group.}

\begin{abstract}
Let X, Y, and Z be  topological modules over a topological ring $R$. In the first part of the paper, we introduce three different classes of bounded bigroup homomorphisms from $X\times Y$ into $Z$ with respect to the three different uniform convergence topologies. We show that these spaces form again topological modules over $R$. In the second part, we characterize bounded sets in the arbitrary product of topological groups with respect to the both product and box topologies.
\end{abstract}

\date{\today}

\maketitle
\section{introduction and preliminaries}
In \cite{Mi}, some notions for bounded group homomorphisms on a topological ring have been introduced. Also, it has been proved that each class of bounded group homomorphisms on a topological ring, with respect to an appropriate topology, forms a topological ring. Also, an analogous statement for topological groups and bounded homomorphisms between them has been investigated in \cite{Km}. Since every topological ring can be viewed as a topological module over itself, it is not a hard job to see that we can consider the concepts of topological modules of bounded group homomorphisms on a topological module. In fact, the results in \cite{Mi}, can be generalized to topological modules in a natural way. Recall that a \textit{topological module} $X$ is a module with a topology over a topological ring $R$ such that the addition ( as a map from $X\times X$ into $X$), and the multiplication ( as a map from $R\times X$ into $X$) are continuous. There are many examples of topological modules, for instance, every topological vector space is a topological module over a topological field, every abelian topological group is a topological module over $\Bbb Z$, where $\Bbb Z$ denotes the ring of integers with the discrete topology, and also every topological ring is a topological module over each of its subrings. So, it is of independent interest if we consider possible relations between algebraic structures of a module and its topological properties.  In the first part of the present paper, we are going to consider bounded bigroup homomorphisms between topological modules. We endow each class of bounded bigroup homomorphisms to a uniform convergence topology and we show that under the assumed topology, each class of them, forms a topological module. In addition, we see that if each class of bounded bigroup homomorphisms is uniformly complete. In the following, by a bigroup homomorphism on Cartesian product $X\times Y$, we mean a map which is group homomorphism on $X$ and $Y$, respectively. Also, note that if $X$ is a topological module over topological ring $R$, then, $B\subseteq X$ is said to be bounded if for each zero neighborhood $W\subseteq X$, there exists zero neighborhood $V\subseteq R$ such that $VB\subseteq W$. Finally, as a special case, we consider bounded sets in arbitrary Cartesian products of abelian topological groups. For more information about topological modules, topological rings, topological groups, and the related notions, see \cite{Arnautov, Taq, Km, Mi, Shell, Tr, Omid, Km1}.
\section{bounded bigroup homomorphisms}
\begin{definition}
Let $X$, $Y$, and $Z$ be topological modules over a topological ring $R$. A bigroup homomorphism $\sigma:X \times Y \to Z$ is said to be:
\begin{itemize}
\item[\em i.]{$n$-bounded if there exist some zero neighborhoods $U\subseteq X$ and $V\subseteq Y$ such that  $\sigma(U,V)$ is bounded in $Z$}.
\item[\em ii.]{${\frac{n}{2}}$-bounded if there exists a zero neighborhood $U\subseteq X$ such that for each bounded set $B\subseteq Y$, $\sigma(U,B)$ is bounded in $Z$}.
\item[\em iii.]{$b$-bounded if for every bounded subsets $ B_1\subseteq X$ and $B_2\subseteq Y$, $\sigma(B_1,B_2)$ is bounded in $Z$}.
\end{itemize}
\end{definition}
The first point is that these concepts of bounded bigroup homomorphisms are far from being equivalent. In prior to anything, we show this.
\begin{example}\label{1}
Let $X={\Bbb R}^{\Bbb N}$, the space of all real sequences, with
the coordinate-wise topology and the pointwise product. It is easy to see that $X$ is a topological module over itself. Consider
the bigroup homomorphism $\sigma:X\times X\to X$ defined by $\sigma(x,y)=xy$, in which the product is given by pointwise. It is not difficult to see that $\sigma$ is $b$-bounded but since $X$ is not locally bounded, it can not be $n$-bounded.
\end{example}
Also, the above example may apply to determine a $b$-bounded bigroup homomorphism which is not $\frac{n}{2}$-bounded.
\begin{example}
Let $X$ be $\ell_{\infty}$, the space of all bounded real sequences, with the topology induced by the uniform norm and pointwise product. Suppose $Y$ is  $\ell_{\infty}$, with the coordinate-wise topology and pointwise product. Consider the bigroup homomorphism $\sigma$ from $X \times Y$ to $Y$ as in Example \ref{1}. It is easy to see that $\sigma$ is $\frac{n}{2}$-bounded but it is not $n$-bounded. For, suppose $\varepsilon>0$ is arbitrary. Assume that $N_{\varepsilon}^{(0)}$ is the ball with centre zero and radius $\varepsilon$ in $X$. If $U$ is an arbitrary zero neighborhood in $Y$, without loss of generality, we may assume that $U$ is of the form
\[(-\varepsilon_1,\varepsilon_1)\times\ldots\times(-\varepsilon_r,\varepsilon_r)\times \Bbb R\times \Bbb R\times\ldots,\]
in which, $\varepsilon_i>0$. Fix $0<\delta<\min\{\varepsilon_i\}$. Consider the sequence $(a_n)\subseteq U$ defined by $a_n=(\delta,\ldots,\delta,n,\ldots,n,o,\ldots)$, in which $\delta$ is appeared $r$ times and $n$ equips $n-r$ components.
 Now, it is not difficult to see that $\sigma(N_{\varepsilon}^{(0)},(a_n))$ can not be a bounded subset of $Y$.
\end{example}
\begin{example}\label{200}
Let $X$ be $\ell_{\infty}$, with pointwise product and the uniform norm topology, and $Y$ be $\ell_{\infty}$, with the zero multiplication and the topology induced by norm. Consider $\sigma$ from $X\times Y$ to $X$ as in Example \ref{1}. Then, $\sigma$ is $n$-bounded but it is not $\frac{n}{2}$-bounded. For, suppose $\varepsilon>0$ is arbitrary. Consider the sequence $(a_n)$ in $Y$ defined by $a_n=(\frac{1}{\varepsilon},\ldots,\frac{n}{\varepsilon},0,\ldots)$. $(a_n)$ is bounded in $Y$ but $\sigma(N_{\varepsilon}^{(0)},(a_n))$ contains the sequence $(1,\ldots,n,0,\ldots)$ which is not bounded in $X$.
\end{example}
Since topological modules are topological spaces, we can consider the concept of jointly continuity for a bigroup homomorphism between
topological modules. The interesting result in this case, in spite of the case related to topological vector spaces and topological groups notions, is that there is no relation between jointly continuous bigroup homomorphisms and bounded ones; see \cite{Km, Omid} for more details on these concepts.
To see this, consider the following example.
\begin{example}
Let $X$ be $\ell_{\infty}$, with the pointwise product and coordinate-wise topology, and $Y$ be $\ell_{\infty}$, with the zero multiplication and the uniform norm topology. Consider the bigroup homomorphism $\sigma$ from $X\times X$ into $Y$ as in Example \ref{1}. Indeed, $\sigma$ is $b$-bounded and $n$-bounded but it is easy to see that $\sigma$ can not be jointly continuous.
\end{example}
The class of all $n$-bounded bigroup homomorphisms on a topological module $X$ is denoted by $B_{n}(X\times X)$ and is equipped with the topology of uniform
convergence on some zero neighborhoods, namely, a net $(\sigma_{\alpha})$ of $n$-bounded bigroup homomorphisms converges uniformly to zero on some zero neighborhoods
$U, V \subseteq X$ if for each zero neighborhood $W\subseteq X$ there is an $\alpha_0$
with $\sigma_{\alpha}(U,V)\subseteq W$ for each $\alpha\geq\alpha_0$.
The set of all $\frac{n}{2}$-bounded bigroup homomorphisms on a topological module $X$ is denoted by $B_{\frac{n}{2}}(X\times X)$ and it is assigned
with the topology of $\sigma$-uniformly convergence on some zero neighborhood. We say that a net $(\sigma_{\alpha})$ of $\frac{n}{2}$-bounded bigroup homomorphisms converges $\sigma$-uniformly to zero on some zero neighborhood if there exists a zero neighborhood $U\subseteq X$ such that for each bounded set $B\subseteq X$ and for each zero neighborhood $W\subseteq X$ there is an $\alpha_0$ with  $\sigma_{\alpha}(U,B)\subseteq W$ for each $\alpha\geq\alpha_0$. Finally, the class of all $b$-bounded bigroup homomorphisms on a topological module $X$ is denoted by $B_{b}(X\times X)$ and is endowed with the topology of uniform convergence on bounded sets which means a net $(\sigma_{\alpha})$ of $b$-bounded bigroup homomorphisms converges uniformly to zero on bounded sets $B_1,B_2\subseteq X$ if for each zero neighborhood $W\subseteq X$ there is an $\alpha_0$
with $\sigma_{\alpha}(B_1,B_2)\subseteq W$ for each $\alpha\geq\alpha_0$.
In this part of the paper, we show that the operations of addition and module multiplication are continuous in each of the topological modules $B_n(X\times X), B_{\frac{n}{2}}(X\times X)$, and $B_b(X\times X)$ with respect to the assumed topology, respectively. So, each of them forms a topological $R$-module.
\begin{theorem}
The operations of addition and module multiplication in $B_{n}(X\times X)$ are continuous with respect to the topology of uniform convergence on some zero neighborhoods.
\end{theorem}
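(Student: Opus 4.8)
The plan is to work directly with the net description of the topology, showing that each operation preserves convergence; since these are group topologies I would use translation invariance, so that $\sigma_\alpha\to\sigma$ means exactly $\sigma_\alpha-\sigma\to 0$. For addition, suppose $\sigma_\alpha\to\sigma$ and $\tau_\alpha\to\tau$. By definition there are zero neighborhoods $U_1,V_1$ witnessing $\sigma_\alpha-\sigma\to 0$ and $U_2,V_2$ witnessing $\tau_\alpha-\tau\to 0$. Setting $U=U_1\cap U_2$ and $V=V_1\cap V_2$ and using that shrinking the domain shrinks the image, both differences converge to zero uniformly on the common pair $U,V$. Then, given a zero neighborhood $W\subseteq X$, I would pick $W'$ with $W'+W'\subseteq W$ by continuity of addition in $X$; eventually $(\sigma_\alpha-\sigma)(U,V)\subseteq W'$ and $(\tau_\alpha-\tau)(U,V)\subseteq W'$, whence $\bigl((\sigma_\alpha+\tau_\alpha)-(\sigma+\tau)\bigr)(U,V)\subseteq W'+W'\subseteq W$. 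The same intersection device shows the sum of two $n$-bounded maps is $n$-bounded, so the operation lands in $B_n(X\times X)$.

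For module multiplication I would take a net $(r_\alpha,\sigma_\alpha)\to(r,\sigma)$ in $R\times B_n(X\times X)$ and exploit the decomposition
\[ r_\alpha\sigma_\alpha-r\sigma = r_\alpha(\sigma_\alpha-\sigma)+(r_\alpha-r)\sigma, \]
reducing the problem to sending each summand to zero. For the first summand, let $U,V$ witness $\sigma_\alpha-\sigma\to 0$; given a zero neighborhood $W\subseteq X$, continuity of the scalar multiplication $R\times X\to X$ at $(r,0)$ supplies a neighborhood $N$ of $r$ in $R$ and a zero neighborhood $W'\subseteq X$ with $NW'\subseteq W$. Since $r_\alpha\to r$, eventually $r_\alpha\in N$, and eventually $(\sigma_\alpha-\sigma)(U,V)\subseteq W'$, so $r_\alpha(\sigma_\alpha-\sigma)(U,V)\subseteq NW'\subseteq W$ eventually, giving $r_\alpha(\sigma_\alpha-\sigma)\to 0$ on $U,V$.

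The second summand is the crux, and it is precisely here that the $n$-boundedness of $\sigma$ is indispensable: choose zero neighborhoods $U',V'$ with $B:=\sigma(U',V')$ bounded in $X$. Given $W$, the definition of a bounded set provides a zero neighborhood $V_R\subseteq R$ with $V_RB\subseteq W$; as $r_\alpha-r\to 0$ in $R$, eventually $r_\alpha-r\in V_R$, hence $(r_\alpha-r)\sigma(U',V')=(r_\alpha-r)B\subseteq V_RB\subseteq W$, so $(r_\alpha-r)\sigma\to 0$ on $U',V'$. Intersecting the two witnessing pairs and splitting a given $W$ via $W'+W'\subseteq W$ as in the additive case yields $r_\alpha\sigma_\alpha\to r\sigma$; finally $r\sigma$ is again $n$-bounded because a scalar multiple of a bounded set is bounded, which follows from continuity of the multiplication of $R$ and associativity of the action. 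The only real subtlety throughout is bookkeeping the existential ``some zero neighborhoods'' quantifier — one must repeatedly pass to intersections so that every uniform estimate lives on a single common pair $U,V$ — whereas the triangle step and the two continuity inputs are routine.
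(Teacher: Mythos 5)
Your proof is correct, and it is in fact more complete than the paper's own argument. The paper verifies continuity of both operations only at the origin: it takes nets $(\sigma_{\alpha})$ and $(\gamma_{\alpha})$ converging uniformly to zero (tacitly on the \emph{same} pair $U,V$) and shows $\sigma_{\alpha}+\gamma_{\alpha}\to 0$, and then takes $r_{\alpha}\to 0$ in $R$ together with $\sigma_{\alpha}\to 0$ and shows $r_{\alpha}\sigma_{\alpha}\to 0$ using a single choice $V_1W_2\subseteq W$ coming from continuity of the action at $(0,0)$. For addition this origin-only check is harmless, since the topology is translation invariant and $(\sigma_{\alpha}+\gamma_{\alpha})-(\sigma+\gamma)=(\sigma_{\alpha}-\sigma)+(\gamma_{\alpha}-\gamma)$; but for module multiplication, joint continuity at $(0,0)$ does not by itself give continuity at a general point $(r,\sigma)$. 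Your decomposition $r_{\alpha}\sigma_{\alpha}-r\sigma=r_{\alpha}(\sigma_{\alpha}-\sigma)+(r_{\alpha}-r)\sigma$ supplies exactly the missing verifications: continuity of the action of $R$ at $(r,0)$ handles the first term, and --- this is the real content --- the $n$-boundedness of $\sigma$ handles the second, since $r_{\alpha}-r$ eventually lies in the zero neighborhood $V_R$ that absorbs the bounded set $\sigma(U',V')$ into $W$. It is telling that the paper's proof never invokes $n$-boundedness at all, a sign that it only establishes the easier origin-at-origin statement. Your bookkeeping with intersections of witnessing pairs also addresses the existential quantifier in the definition of the topology, which the paper glosses over by assuming both nets converge on the same $U,V$; and your closing remarks that sums and scalar multiples of $n$-bounded maps remain $n$-bounded are needed for the operations to be well defined on $B_n(X\times X)$, a point the paper leaves unstated. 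In short, the paper's proof is shorter but checks continuity only at zero, whereas yours checks it at arbitrary points and identifies precisely where boundedness of the limit enters.
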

\begin{proof}
Suppose two nets $(\sigma_{\alpha})$ and $(\gamma_{\alpha})$  of $n$-bounded bigroup homomorphisms converge to zero uniformly on some zero neighborhoods $U,V\subseteq X$. Let $W$ be an arbitrary
zero neighborhood in $X$. So, there is a zero neighborhood
$W_1$ with $ W_1+W_1 \subseteq W$. There are some $\alpha_0$ and $\alpha_1$ such that $\sigma_{\alpha}(U,V) \subseteq W_1$ for each $\alpha\geq\alpha_0$ and $\gamma_{\alpha}(U,V) \subseteq W_1$ for each $\alpha\geq\alpha_1$. Choose an $\alpha_2$ with $\alpha_2\geq \alpha_0$ and $\alpha_2\geq\alpha_1$. If $\alpha\geq\alpha_2$ then $(\sigma_{\alpha}+\gamma_{\alpha})(U,V) \subseteq \sigma_{\alpha}(U,V)+\gamma_{\alpha}(U,V) \subseteq W_1+W_1 \subseteq W$. Thus, the addition is continuous.
Now, we show the continuity of the module multiplication. Suppose $(r_{\alpha})$ is a net in $R$ which is convergent to zero. There are some neighborhoods $V_1\subseteq R$ and $W_2\subseteq X$ such that $V_1W_2\subseteq W$. Find an $\alpha_3$ with $\sigma_{\alpha}(U,V) \subseteq W_2$ for each $\alpha\geq \alpha_3$. Take an $\alpha_4$ such that $(r_{\alpha})\subseteq V_1$ for each $\alpha\geq \alpha_4$. Choose an $\alpha_5$ with $\alpha_5\geq\alpha_3$ and $\alpha_5\geq \alpha_4$. If $\alpha \geq \alpha_5$ then $r_{\alpha}\sigma_{\alpha}(U,V)\subseteq V_1W_2\subseteq W$, as asserted.
\end{proof}
\begin{theorem}
The operations of addition and module multiplication in $B_{\frac{n}{2}}(X\times X)$ are continuous with respect to the topology of $\sigma$-uniform convergence on some zero neighborhood.
\end{theorem}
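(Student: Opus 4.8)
The plan is to follow the template of the preceding theorem, the only genuinely new bookkeeping being the extra universal quantifier over bounded sets $B\subseteq X$ and the existential quantifier over the zero neighborhood witnessing $\sigma$-uniform convergence. Since the argument runs uniformly in $B$, the former causes no trouble; the latter is handled by passing to the intersection of the two witnessing neighborhoods. Before starting I would record two routine verifications that underlie the statement, namely that $\sigma+\gamma$ and $r\sigma$ are again $\frac{n}{2}$-bounded: if $\sigma,\gamma$ are witnessed by $U_1,U_2$ then $(\sigma+\gamma)(U_1\cap U_2,B)\subseteq\sigma(U_1\cap U_2,B)+\gamma(U_1\cap U_2,B)$, a sum of two bounded sets, and a sum of bounded sets in a topological module is bounded; likewise $r\cdot\sigma(U,B)$ is a scalar multiple of a bounded set, which is bounded once one uses continuity of the ring multiplication $s\mapsto sr$.

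For the continuity of addition, I would start with two nets $(\sigma_{\alpha})$ and $(\gamma_{\alpha})$ that converge $\sigma$-uniformly to zero, say on zero neighborhoods $U_1$ and $U_2$ respectively, and set $U=U_1\cap U_2$, which is again a zero neighborhood contained in both. Fix an arbitrary bounded set $B\subseteq X$ and an arbitrary zero neighborhood $W\subseteq X$, and choose a zero neighborhood $W_1$ with $W_1+W_1\subseteq W$. By hypothesis there are indices beyond which $\sigma_{\alpha}(U_1,B)\subseteq W_1$ and $\gamma_{\alpha}(U_2,B)\subseteq W_1$; since $U\subseteq U_1\cap U_2$, the same inclusions hold with $U$ in place of $U_1,U_2$. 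Taking an index dominating both then yields $(\sigma_{\alpha}+\gamma_{\alpha})(U,B)\subseteq\sigma_{\alpha}(U,B)+\gamma_{\alpha}(U,B)\subseteq W_1+W_1\subseteq W$, which is exactly $\sigma$-uniform convergence of the sum on $U$.

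For the module multiplication I would take a net $(r_{\alpha})$ in $R$ converging to zero together with a net $(\sigma_{\alpha})$ converging $\sigma$-uniformly to zero on a zero neighborhood $U$, and show $r_{\alpha}\sigma_{\alpha}\to 0$ on the same $U$. Fixing a bounded set $B$ and a zero neighborhood $W$, I would use the continuity of the scalar multiplication $R\times X\to X$ at the origin to produce zero neighborhoods $V_1\subseteq R$ and $W_2\subseteq X$ with $V_1W_2\subseteq W$. Eventually $\sigma_{\alpha}(U,B)\subseteq W_2$ and $r_{\alpha}\in V_1$, so for indices dominating both one gets $(r_{\alpha}\sigma_{\alpha})(U,B)=r_{\alpha}\,\sigma_{\alpha}(U,B)\subseteq V_1W_2\subseteq W$, as required.

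I do not expect any real obstacle here: the proof is structurally identical to that of the $n$-bounded case, and the only point demanding care is keeping a \emph{single} zero neighborhood $U$ valid throughout both the membership claim and the convergence claim, which the intersection device guarantees.
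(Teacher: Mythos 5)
Your proof is correct and follows essentially the same argument as the paper's: the $W_1+W_1\subseteq W$ splitting for addition, and the neighborhoods $V_1\subseteq R$, $W_2\subseteq X$ with $V_1W_2\subseteq W$ coming from continuity of the module multiplication for the scalar part. The only difference is that you are slightly more careful than the paper, which tacitly assumes both nets converge $\sigma$-uniformly on the \emph{same} zero neighborhood $U$; your intersection device (together with the routine closure-under-operations check) fills that small gap but does not change the argument.
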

\begin{proof}
Suppose two nets $(\sigma_{\alpha})$ and $(\gamma_{\alpha})$  of ${\frac{n}{2}}$-bounded bigroup homomorphisms converge to zero $\sigma$-uniformly on some zero neighborhood $U\subseteq X$. Fix a bounded set $B\subseteq X$. Let $W$ be an arbitrary
zero neighborhood in $X$. So, there is a zero neighborhood
$W_1$ with $ W_1+W_1 \subseteq W$. There are some $\alpha_0$ and $\alpha_1$ such that $\sigma_{\alpha}(U,B) \subseteq W_1$ for each $\alpha\geq\alpha_0$ and $\gamma_{\alpha}(U,B) \subseteq W_1$ for each $\alpha\geq\alpha_1$. Choose an $\alpha_2$ with $\alpha_2\geq \alpha_0$ and $\alpha_2\geq\alpha_1$. If $\alpha\geq\alpha_2$ then $(\sigma_{\alpha}+\gamma_{\alpha})(U,B) \subseteq \sigma_{\alpha}(U,B)+\gamma_{\alpha}(U,B) \subseteq W_1+W_1 \subseteq W$. Thus, the addition is continuous.
Now, we show the continuity of the module multiplication. Suppose $(r_{\alpha})$ is a net in $R$ which is convergent to zero. There are some neighborhoods $V_1\subseteq R$ and $W_2\subseteq X$ such that $V_1W_2\subseteq W$. Find an $\alpha_3$ with $\gamma_{\alpha}(U,B) \subseteq W_2$ for each $\alpha\geq \alpha_3$. Take an $\alpha_4$ such that $(r_{\alpha})\subseteq V_1$ for each $\alpha\geq \alpha_4$. Choose an $\alpha_5$ with $\alpha_5\geq\alpha_3$ and $\alpha_5\geq \alpha_4$. If $\alpha \geq \alpha_5$ then $r_{\alpha}\sigma_{\alpha}(U,B)\subseteq V_1W_2\subseteq W$, as we wanted.
\end{proof}
\begin{theorem}
The operations of addition and module multiplication in $B_{b}(X\times X)$ are continuous with respect to the topology of uniform convergence on bounded sets.
\end{theorem}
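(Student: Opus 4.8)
The plan is to follow the template established in the proofs of the two preceding theorems verbatim, replacing the distinguished neighborhoods by arbitrary bounded sets. Throughout I would fix two bounded subsets $B_1,B_2\subseteq X$; these play the role that $U,V$ played for $B_{n}(X\times X)$ and that $U,B$ played for $B_{\frac{n}{2}}(X\times X)$. Since $B_b(X\times X)$ is a topological group under addition, it suffices to verify both operations at the origin, i.e. to show that the relevant nets converge to zero.

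For the continuity of addition I would start with two nets $(\sigma_{\alpha})$ and $(\gamma_{\alpha})$ of $b$-bounded bigroup homomorphisms, each converging to zero uniformly on bounded sets. Given an arbitrary zero neighborhood $W\subseteq X$, continuity of the addition $X\times X\to X$ yields a zero neighborhood $W_1$ with $W_1+W_1\subseteq W$. The definition of the convergence then supplies indices $\alpha_0,\alpha_1$ with $\sigma_{\alpha}(B_1,B_2)\subseteq W_1$ for $\alpha\ge\alpha_0$ and $\gamma_{\alpha}(B_1,B_2)\subseteq W_1$ for $\alpha\ge\alpha_1$; choosing $\alpha_2\ge\alpha_0$ and $\alpha_2\ge\alpha_1$ gives $(\sigma_{\alpha}+\gamma_{\alpha})(B_1,B_2)\subseteq W_1+W_1\subseteq W$ for $\alpha\ge\alpha_2$. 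As $B_1,B_2$ and $W$ were arbitrary, the sum converges to zero uniformly on bounded sets.

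For the continuity of the module multiplication I would take a net $(r_{\alpha})$ in $R$ converging to zero together with the net $(\sigma_{\alpha})$ converging to zero uniformly on bounded sets. Continuity of the ring action $R\times X\to X$ at the origin produces zero neighborhoods $V_1\subseteq R$ and $W_2\subseteq X$ with $V_1W_2\subseteq W$. I would pick $\alpha_3$ with $\sigma_{\alpha}(B_1,B_2)\subseteq W_2$ for $\alpha\ge\alpha_3$, and $\alpha_4$ with $r_{\alpha}\in V_1$ for $\alpha\ge\alpha_4$; then for $\alpha\ge\alpha_5$ with $\alpha_5\ge\alpha_3$ and $\alpha_5\ge\alpha_4$ one obtains $r_{\alpha}\sigma_{\alpha}(B_1,B_2)\subseteq V_1W_2\subseteq W$, as wanted.

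I do not expect a genuine obstacle, since the argument is structurally identical to the two previous theorems; the only point deserving a word is well-definedness, namely that $\sigma+\gamma$ and $r\sigma$ are again $b$-bounded whenever $\sigma,\gamma$ are. This is immediate from the definition: if $\sigma(B_1,B_2)$ and $\gamma(B_1,B_2)$ are bounded, then so is their sum, and $r\,\sigma(B_1,B_2)$ is bounded because scalar multiples of bounded sets remain bounded in a topological module. The one place to remain careful is that the estimates above must hold for every pair of bounded sets $B_1,B_2$ rather than for a single fixed pair, which is exactly what the topology of uniform convergence on bounded sets requires.
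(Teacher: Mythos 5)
Your proposal is correct and follows essentially the same argument as the paper: fixing bounded sets $B_1,B_2$, splitting $W$ as $W_1+W_1$ for addition, and using a pair $V_1\subseteq R$, $W_2\subseteq X$ with $V_1W_2\subseteq W$ for the module multiplication. Your brief remark on well-definedness (that $\sigma+\gamma$ and $r\sigma$ remain $b$-bounded) is a small addition the paper leaves implicit, but the proofs are otherwise the same.
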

\begin{proof}
Suppose two nets $(\sigma_{\alpha})$ and $(\gamma_{\alpha})$  of $b$-bounded bigroup homomorphisms converge to zero uniformly on bounded sets. Fix two bounded sets $B_1,B_2\subseteq X$. Let $W$ be an arbitrary
zero neighborhood in $X$. So, there is a zero neighborhood
$W_1$ with $ W_1+W_1 \subseteq W$. There are some $\alpha_0$ and $\alpha_1$ such that $\sigma_{\alpha}(B_1,B_2) \subseteq W_1$ for each $\alpha\geq\alpha_0$ and $\gamma_{\alpha}(B_1,B_2) \subseteq W_1$ for each $\alpha\geq\alpha_1$. Choose an $\alpha_2$ with $\alpha_2\geq \alpha_0$ and $\alpha_2\geq\alpha_1$. If $\alpha\geq\alpha_2$ then $(\sigma_{\alpha}+\gamma_{\alpha})(B_1,B_2) \subseteq \sigma_{\alpha}(B_1,B_2)+\gamma_{\alpha}(B_1,B_2) \subseteq W_1+W_1 \subseteq W$. Thus, the addition is continuous.
Now, we show the continuity of the module multiplication. Suppose $(r_{\alpha})$ is a net in $R$ which is convergent to zero. There are some neighborhoods $V_1\subseteq R$ and $W_2\subseteq X$ such that $V_1W_2\subseteq W$. Find an $\alpha_3$ with $\gamma_{\alpha}(B_1,B_2) \subseteq W_2$ for each $\alpha\geq \alpha_3$. Take an $\alpha_4$ such that $(r_{\alpha})\subseteq V_1$ for each $\alpha\geq \alpha_4$. Choose an $\alpha_5$ with $\alpha_5\geq\alpha_3$ and $\alpha_5\geq \alpha_4$. If $\alpha \geq \alpha_5$ then $r_{\alpha}\sigma_{\alpha}(B_1,B_2)\subseteq V_1W_2\subseteq W$, as asserted.
\end{proof}
In this step, we investigate whether each class of bounded bigroup homomorphisms is uniformly complete. The answer for $B_b(X\times X)$ is affirmative but for other cases there exist counterexamples.
\begin{remark}\label{100}
The class $B_{n}(X\times X)$ can contain a Cauchy sequence whose limit is not an $n$-bounded bigroup homomorphism.
Let $X={\Bbb R}^{\Bbb N}$, the space of all real sequences, with the coordinate-wise topology and the pointwise product. Define the bigroup homomorphisms $\sigma_n$ on $X$ as follows:
\[\sigma_n(x,y)=(x_1y_1,\ldots,x_ny_n,0,\ldots),\]
in which $x=(x_i)_{i=1}^{\infty}$ and $y=(y_i)_{i=1}^{\infty}$. Each $\sigma_n$ is $n$-bounded. For, if
\[U_n =\{x \in X, |x_j|<1,j=0,1,\ldots,n\},\]
then, $\sigma_n(U_n,U_n)$ is bounded in $X$. Also, $(\sigma_n)$ is a Cauchy sequence in $B_{n}(X\times X)$. Because if $W$ is an arbitrary zero neighborhood in $X$, without loss of generality, we may assume that it is of the form
\[W=(-\varepsilon_1,\varepsilon_1)\times\ldots\times(-\varepsilon_r,\varepsilon_r)\times \Bbb R\times \Bbb R\times\ldots,\]
in which $\varepsilon_i>0$. So, for $m,n >r$; we have $(\sigma_n-\sigma_m)(X,X)\subseteq W$. Also, $(\sigma_n)$ converges uniformly on $(X,X)$ to the bigroup homomorphism $\sigma$ defined by
\[\sigma(x,y)=(x_1y_1,x_2y_2,\ldots).\]
But we have seen in Example \ref{1} that $\sigma$ is not $n$-bounded.
\end{remark}
\begin{remark}
The class $B_{{\frac{n}{2}}}(X\times X)$ can contain a Cauchy sequence whose limit is not an ${\frac{n}{2}}$-bounded bigroup homomorphism.
Let $X$ be $\ell_{\infty}$, with the pointwise product and the uniform norm topology, and $Y$ be $\ell_{\infty}$, with the zero multiplication and the topology induced by norm. Consider bigroup homomorphisms $\sigma_n$ from $X\times Y$ to $X$ as in Remark \ref{100}. It is not difficult to see that each $\sigma_n$ is  ${\frac{n}{2}}$-bounded. Also, $(\sigma_n)$ is a Cauchy sequence in $B_{{\frac{n}{2}}}(X\times X)$ which is convergent $\sigma$-uniformly on $X$ to the bigroup homomorphism $\sigma$ described in Example \ref{200}, so that it is not an  ${\frac{n}{2}}$-bounded bigroup homomorphism.
\end{remark}
\begin{proposition}
Suppose  a net $(\sigma_{\alpha})$ of $b$-bounded bigroup homomorphisms converges to a bigroup homomorphism $\sigma$ uniformly on bounded sets.
Then $\sigma$ is also $b$-bounded.
\end{proposition}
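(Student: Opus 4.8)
The plan is to verify the defining property of $b$-boundedness directly. Fix arbitrary bounded sets $B_1, B_2 \subseteq X$ and an arbitrary zero neighborhood $W \subseteq X$; the goal is to produce a zero neighborhood $V \subseteq R$ with $V\sigma(B_1,B_2) \subseteq W$. The natural starting point is the decomposition $\sigma = \sigma_\beta + (\sigma - \sigma_\beta)$ for a suitably chosen index $\beta$, since each $\sigma_\beta$ is already $b$-bounded while the difference $\sigma - \sigma_\beta$ can be made small on $B_1 \times B_2$ by the uniform convergence hypothesis. Concretely, I would first pick a symmetric zero neighborhood $W_1$ with $W_1 + W_1 \subseteq W$.

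The main obstacle is the term coming from $\sigma - \sigma_\beta$: uniform convergence only places $(\sigma - \sigma_\beta)(B_1, B_2)$ inside a prescribed zero neighborhood, \emph{not} inside a bounded set, so one cannot absorb it by a scalar neighborhood the way one handles the bounded set $\sigma_\beta(B_1,B_2)$. The device to overcome this is the continuity of the module multiplication $R \times X \to X$ at the origin: since $0\cdot 0 = 0$, for the neighborhood $W_1$ there exist zero neighborhoods $V_2 \subseteq R$ and $W_3 \subseteq X$ with $V_2 W_3 \subseteq W_1$. The order of the quantifiers is crucial here, as I must fix $V_2$ and $W_3$ \emph{before} selecting $\beta$.

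With $W_3$ in hand, the uniform convergence of $(\sigma_\alpha)$ to $\sigma$ on the bounded sets $B_1, B_2$ yields an index $\beta$ with $(\sigma - \sigma_\beta)(B_1, B_2) \subseteq W_3$ (taking $W_3$ symmetric disposes of the sign). Since $\sigma_\beta$ is $b$-bounded, the set $\sigma_\beta(B_1, B_2)$ is bounded in $X$, so there is a zero neighborhood $V_1 \subseteq R$ with $V_1 \sigma_\beta(B_1, B_2) \subseteq W_1$. Setting $V = V_1 \cap V_2$, I then estimate
\[ V\sigma(B_1, B_2) \subseteq V\sigma_\beta(B_1,B_2) + V(\sigma - \sigma_\beta)(B_1, B_2) \subseteq W_1 + V_2 W_3 \subseteq W_1 + W_1 \subseteq W. \]
This exhibits the required scalar neighborhood, so $\sigma(B_1, B_2)$ is bounded; as $B_1, B_2$ were arbitrary bounded sets and $\sigma$ is assumed to be a bigroup homomorphism, $\sigma$ is $b$-bounded. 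I expect the only genuinely delicate point to be the interplay just described between the fixed neighborhoods $V_2, W_3$ and the subsequently chosen index $\beta$; everything else is routine manipulation of zero neighborhoods in a topological module.
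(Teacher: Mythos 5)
Your proposal is correct and follows essentially the same route as the paper's proof: decompose $\sigma$ as $\sigma_\alpha$ plus the difference $\sigma-\sigma_\alpha$, fix the neighborhoods witnessing continuity of the module multiplication \emph{before} choosing the index, control the difference by uniform convergence on $B_1,B_2$, and absorb $\sigma_\alpha(B_1,B_2)$ by a scalar zero neighborhood, finally combining via $W_1+W_1\subseteq W$. Your bookkeeping (intersecting $V_1\cap V_2$ and sending $V_1\sigma_\beta(B_1,B_2)$ directly into $W_1$) is in fact slightly tidier than the paper's, whose final chain $W_2+V_1W_2\subseteq W_1+W_1$ tacitly assumes $W_2\subseteq W_1$.
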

\begin{proof}
Fix  bounded sets $B_1,B_2\subseteq X$. Let $W$ be an arbitrary zero neighborhood in $X$. There is a zero neighborhood $W_1$ such that $W_1+W_1\subseteq W$. Choose a zero neighborhood $V_1\subseteq R$ and a zero neighborhood $W_2\subseteq X$ with $V_1 W_2 \subseteq W_1$. There is an $\alpha_0$ such that $(\sigma_{\alpha}-\sigma)(B_1,B_2) \subseteq W_2$ for each $\alpha\geq\alpha_0$. Fix an $\alpha\geq\alpha_0$. So, there is a zero neighborhood $V_2\subseteq V_1$ with $V_2\sigma_{\alpha}(B_1,B_2) \subseteq W_2$. Therefore,
\[V_2\sigma(B_1,B_2) \subseteq V_2\sigma_{\alpha}(B_1,B_2) + V_2 W_2 \subseteq W_2+V_1W_2 \subseteq W_1+W_1 \subseteq W.\]
\end{proof}
\section{Bounded sets in topological groups}
Let us start with some remarks on boundedness which clarify the context. Suppose $X$ is a topological vector space. When one wants to define a bounded set in $X$, there are two absolutely fruitful tools; scalar multiplication and absorbing neighborhoods at zero. These objects help us to match our intrinsic of boundedness in topological vector spaces; namely, a subset is bounded if it lies in a big enough ball. Now, consider the case when $G$ is a topological group. These two handy material are not available. Of course, it is possible to define bounded sets in a topological group by replacing scalar multiplication with group multiplication in the definition of a bounded set in a topological vector space but this does not meet our intuition of a bounded set since for example the multiplicative group $S^1$ is not bounded in this manner. In addition, it is also possible to consider boundedness in a topological group like totally boundedness in a topological vector space but this one also does not match our intrinsic since it is similar to compactness in the additive group $\Bbb R$. Following \cite{Km}, a subset $B$ in an abelian topological group $G$ is called bounded if for each neighborhood $U$ of the identity, there is an $n\in\Bbb N$ such that $B\subseteq nU$. Let $(G_{\alpha})_{\alpha\in \Lambda}$ be a family of abelian topological groups and $G=\prod_{\alpha\in \Lambda}G_{\alpha}$. It is an easy job to see that $G$ is again an abelian topological group with respect to the both product and box topologies. In this step, we characterise bounded sets of $G$ in terms of bounded sets of $(G_{\alpha})'s$. All topological groups are assumed to be abelian and Hausdorff.

First, we improve \cite[Theorem 1]{Km}.
\begin{theorem}\label{10}
Let $(G_{\alpha})_{\alpha \in\Lambda}$ be a family of abelian topological groups and $G=\prod_{\alpha\in \Lambda}G_{\alpha}$ with the product topology. Then $B\subseteq G$ is bounded if and only if there exists a family of subsets $(B_{\alpha})_{\alpha\in\Lambda}$ such that each $B_{\alpha}\subseteq G_{\alpha}$ is bounded and $B\subseteq \prod_{\alpha\in\Lambda}B_{\alpha}$.
\end{theorem}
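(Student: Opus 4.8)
The plan is to prove both implications directly, using only the two defining features of the product topology: the continuity of the coordinate projections $\pi_\alpha : G \to G_\alpha$, and the fact that a basic zero neighborhood of $G$ constrains only finitely many coordinates. Throughout, $nU$ denotes the $n$-fold sumset $U+\cdots+U$, and I will repeatedly use the immediate observation that every zero neighborhood contains the identity.

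For the forward direction, suppose $B$ is bounded and set $B_\alpha = \pi_\alpha(B)$. Then $B \subseteq \prod_{\alpha\in\Lambda} B_\alpha$ is automatic, so only the boundedness of each $B_\alpha$ needs checking. Fixing a zero neighborhood $U_\alpha \subseteq G_\alpha$, continuity of $\pi_\alpha$ makes $\pi_\alpha^{-1}(U_\alpha)$ a zero neighborhood in $G$, so boundedness of $B$ yields $n$ with $B \subseteq n\,\pi_\alpha^{-1}(U_\alpha)$. Since $\pi_\alpha$ is a group homomorphism, applying it to a sum of $n$ elements of $\pi_\alpha^{-1}(U_\alpha)$ lands in $nU_\alpha$, whence $B_\alpha = \pi_\alpha(B) \subseteq nU_\alpha$. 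This is exactly the boundedness of $B_\alpha$.

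For the converse, I would first record that a subset of a bounded set is bounded, so it suffices to show $\prod_{\alpha} B_\alpha$ is bounded whenever each $B_\alpha$ is. Let $U$ be a zero neighborhood of $G$. After shrinking, choose a basic zero neighborhood $W = \prod_{\alpha\in F} U_\alpha \times \prod_{\alpha\notin F} G_\alpha \subseteq U$ with $F \subseteq \Lambda$ finite and each $U_\alpha$ a zero neighborhood of $G_\alpha$. For each of the finitely many $\alpha \in F$, boundedness of $B_\alpha$ supplies $n_\alpha$ with $B_\alpha \subseteq n_\alpha U_\alpha$; set $n = \max_{\alpha\in F} n_\alpha$. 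Padding sums with the identity gives $n_\alpha U_\alpha \subseteq nU_\alpha$, hence $B_\alpha \subseteq nU_\alpha$ for each $\alpha \in F$, while for $\alpha \notin F$ one has $B_\alpha \subseteq G_\alpha = nG_\alpha$. Computing the sumset coordinatewise shows $nW = \prod_{\alpha\in F} nU_\alpha \times \prod_{\alpha\notin F} G_\alpha$, so $\prod_\alpha B_\alpha \subseteq nW \subseteq nU$, which is the desired boundedness.

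The only genuinely delicate point is this converse, and it is precisely here that the product topology is indispensable: a basic zero neighborhood restricts only the finitely many coordinates in $F$, so a single integer $n = \max_{\alpha\in F} n_\alpha$ absorbs all of $\prod_\alpha B_\alpha$ at once. Without the finiteness of $F$ — as occurs under the box topology — such a maximum need not exist, which foreshadows why the box case will demand a separate, stronger hypothesis.
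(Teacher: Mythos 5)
Your proof is correct. The forward direction coincides with the paper's own argument: your $\pi_\alpha^{-1}(U_\alpha)$ is exactly the neighborhood $U_\alpha\times\prod_{\beta\neq\alpha}G_\beta$ that the paper constructs, and both proofs push the absorption $B\subseteq n\,\pi_\alpha^{-1}(U_\alpha)$ through the projection to get $B_\alpha\subseteq nU_\alpha$. The real difference is in the converse: the paper does not argue it at all, but simply cites \cite[Theorem 1]{Km} (the statement that a product of bounded sets is bounded in the product topology), whereas you prove that fact from scratch --- shrink $U$ to a basic neighborhood $W$ constraining only a finite set $F$ of coordinates, absorb each $B_\alpha$ ($\alpha\in F$) into $n_\alpha U_\alpha$, pass to $n=\max_{\alpha\in F}n_\alpha$ by padding sums with the identity, and compute the sumset $nW$ coordinatewise. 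Your inlined argument is precisely the content of the cited result, so the two proofs carry the same mathematics; what yours buys is self-containedness, plus an explicit display of where the finiteness of $F$ enters --- which, as you note, is exactly the point that fails for the box topology and motivates the paper's separate Theorem \ref{300}. What the paper's version buys is brevity, at the cost of making the ``improvement'' over \cite[Theorem 1]{Km} rest on an external reference. One small point worth keeping from your write-up: the padding step $n_\alpha U_\alpha\subseteq nU_\alpha$ silently uses that the identity lies in $U_\alpha$, which you correctly flagged; this hypothesis is automatic for neighborhoods of the identity but is the kind of detail that is easy to elide.
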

\begin{proof}
Suppose $B\subseteq G$ is bounded. Put
\begin{center}
$B_{\alpha}=\{x\in G_{\alpha}: \exists y=(y_{\beta})\in B   $ {and} $   x  $ {is} $ {\alpha} $ {-th coordinate of} $ y\}.$
\end{center}
Each $B_{\alpha}$ is bounded. For, if $U_{\alpha}$ is a neighborhood of identity in $G_{\alpha}$, put
\[U=U_{\alpha}\times\prod_{\beta\neq \alpha}G_{\beta}.\]
Indeed, $U$ is a neighborhood of identity in $G$. Therefore there is a positive integer $n$ with $B\subseteq nU$ so that $B_{\alpha}\subseteq nU_{\alpha}$. Now, it is not difficult to see that $B\subseteq \prod_{\alpha\in\Lambda}B_{\alpha}$.
The converse is a consequence of \cite[Theorem 1]{Km}.
\end{proof}
Note that in a general abelian topological group, every singleton is not necessarily bounded; in other words, not every neighborhood at identity is absorbing. For example, let $G$ be an abelian topological group. Put $H=G\times Z_2$ with the product topology. Then $G\times \{0\}$ is a zero neighborhood which is not absorbing. On the other hand, when $G$ is a connected abelian group, by \cite[Chapter III, Theorem 6]{Taq}, $G$ is absorbed by positive powers of any neighborhood at identity so that singletons will be bounded. Nevertheless, connectedness is a sufficient condition; consider the additive group $\Bbb Q$. In the case when in a topological group $G$, singletons are bounded, compact sets are bounded and therefore we can consider the notion " Heine-Borel" property. Recall that $G$ has the Heine-Borel property if every closed bounded subset of $G$ is compact. Now, we have the following result.
\begin{corollary}
Suppose $(G_{\alpha})_{\alpha\in\Lambda}$ are a family of topological groups in which singletons are bounded and $G=\prod_{\alpha\in \Lambda}G_{\alpha}$ with the product topology. Then singletons are also bounded in $G$.
\end{corollary}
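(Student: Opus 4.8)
The plan is to read the statement directly off Theorem \ref{10}, since a singleton in a Cartesian product is nothing but the product of its coordinate singletons. Concretely, I would fix an arbitrary point $x=(x_{\alpha})_{\alpha\in\Lambda}\in G$ and exhibit $\{x\}$ as sitting inside a product of coordinatewise bounded pieces, so that the easy half of Theorem \ref{10} applies without further work.

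First I would set $B_{\alpha}=\{x_{\alpha}\}\subseteq G_{\alpha}$ for each $\alpha\in\Lambda$. By the standing hypothesis that singletons are bounded in every $G_{\alpha}$, each $B_{\alpha}$ is a bounded subset of $G_{\alpha}$. Next I would record the trivial set-theoretic identity $\{x\}=\prod_{\alpha\in\Lambda}\{x_{\alpha}\}=\prod_{\alpha\in\Lambda}B_{\alpha}$, so in particular $\{x\}\subseteq\prod_{\alpha\in\Lambda}B_{\alpha}$. The converse direction of Theorem \ref{10} (which itself rests on \cite[Theorem 1]{Km}) now applies verbatim and yields that $\{x\}$ is bounded in $G$. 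As $x$ was arbitrary, every singleton of $G$ is bounded, which is the assertion.

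There is essentially no obstacle here: the corollary is a one-line specialization of the theorem, and the only genuine content has already been carried by the characterization. The one place where care would be needed is if one instead argued directly from the definition of boundedness, bypassing Theorem \ref{10}. In that case one would take a basic product neighborhood $U=\prod_{\alpha}U_{\alpha}$ of the identity, with $U_{\alpha}=G_{\alpha}$ off a finite set $F$, pick for each $\alpha\in F$ an integer $n_{\alpha}$ with $x_{\alpha}\in n_{\alpha}U_{\alpha}$, and set $n=\max_{\alpha\in F}n_{\alpha}$. The subtle point there is the monotonicity $n_{\alpha}U_{\alpha}\subseteq nU_{\alpha}$ for $n\ge n_{\alpha}$, which holds because the identity lies in $U_{\alpha}$, forcing the chain $U_{\alpha}\subseteq 2U_{\alpha}\subseteq 3U_{\alpha}\subseteq\cdots$, while for $\alpha\notin F$ the inclusion $x_{\alpha}\in nU_{\alpha}=G_{\alpha}$ is automatic. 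Routing the proof through Theorem \ref{10} sidesteps this bookkeeping entirely, so that is the route I would take.
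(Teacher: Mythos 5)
Your proof is correct and is exactly the argument the paper intends: the corollary is stated without proof precisely because it is the one-line specialization $\{x\}=\prod_{\alpha\in\Lambda}\{x_{\alpha}\}$ of the ``if'' direction of Theorem \ref{10}, which is the route you take. Your appeal to the boundedness of each $\{x_{\alpha}\}$ and to the converse direction of that theorem settles the claim; the alternative direct argument you sketch is a harmless bonus but not needed.
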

\begin{corollary}
Let $(G_{\alpha})_{\alpha\in\Lambda}$ be a family of topological groups in which singletons are bounded and $G=\prod_{\alpha\in \Lambda}G_{\alpha}$ with the product topology. Then $G$ has the Heine-Borel property if and only if each $G_{\alpha}$ has.
\end{corollary}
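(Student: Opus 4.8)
The plan is to reduce both implications to Theorem \ref{10} (which says a subset of $G$ is bounded exactly when it sits inside a product of bounded sets of the factors), together with Tychonoff's theorem and two elementary facts about abelian topological groups: a product of closed sets is closed in the product topology, and the closure of a bounded set is again bounded. I would record the latter first, since it is the only ingredient that is not completely routine. Given $B$ bounded and a neighbourhood $U$ of the identity $e$, I would pick a symmetric neighbourhood $V\subseteq U$ of $e$ and use the standard inclusion $\overline{B}\subseteq B+V$ (any point of $\overline{B}$ has a translate $x+V$ meeting $B$, and $V=-V$); combined with $B\subseteq nV$ for some $n$ this yields $\overline{B}\subseteq nV+V=(n+1)V\subseteq(n+1)U$, so $\overline{B}$ is bounded.

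For the direction ``each $G_\alpha$ Heine--Borel $\Rightarrow$ $G$ Heine--Borel'', I would take $B\subseteq G$ closed and bounded, apply Theorem \ref{10} to obtain bounded sets $B_\alpha\subseteq G_\alpha$ with $B\subseteq\prod_\alpha B_\alpha$, and pass to closures: each $\overline{B_\alpha}$ is closed and bounded in $G_\alpha$, hence compact by the Heine--Borel property of $G_\alpha$. By Tychonoff's theorem $\prod_\alpha\overline{B_\alpha}$ is compact, and $B$ is a closed subset of $G$ lying inside this product, so $B$ is closed in a compact space and therefore compact.

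For the converse I would fix $\alpha_0\in\Lambda$, take $C\subseteq G_{\alpha_0}$ closed and bounded, and embed it as $\tilde{C}=\prod_\alpha C_\alpha$ with $C_{\alpha_0}=C$ and $C_\alpha=\{e_\alpha\}$ for $\alpha\neq\alpha_0$. Since $C$ is bounded and singletons are bounded by hypothesis, Theorem \ref{10} shows $\tilde{C}$ is bounded in $G$; since each $C_\alpha$ is closed (singletons are closed as the groups are Hausdorff) and a product of closed sets is closed in the product topology, $\tilde{C}$ is closed. Thus $\tilde{C}$ is closed and bounded, hence compact by the Heine--Borel property of $G$, and $C=\pi_{\alpha_0}(\tilde{C})$ is compact as the continuous image of a compact set.

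I expect the only genuine obstacle to be the closure step in the first implication: Theorem \ref{10} produces bounded but not necessarily closed sets $B_\alpha$, so one cannot apply the Heine--Borel property of $G_\alpha$ to them directly, and the whole argument rests on the lemma that the closure of a bounded set stays bounded. Everything else is bookkeeping with projections, products, and Tychonoff's theorem.
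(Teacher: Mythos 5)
Your proof is correct and follows exactly the route the paper intends: the paper states this corollary without any proof, as an immediate consequence of Theorem~\ref{10} together with Tychonoff's theorem, which is precisely your argument. Your one genuine addition --- the lemma that the closure of a bounded set is bounded, needed because Theorem~\ref{10} produces bounded but not necessarily closed factors $B_{\alpha}$ --- is an ingredient the paper silently omits, and your proof of it is valid for the $n$-fold sumset reading of $nU$ used throughout the paper (the reading consistent with the paper's appeal to Husain's theorem on connected groups being absorbed by powers of a neighborhood).
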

\begin{theorem}\label{300}
Let $(G_{\alpha})_{\alpha\in\Lambda}$ be a family of abelian topological groups and $G=\prod_{\alpha\in\Lambda}G_{\alpha}$ with the box topology. Then $B\subseteq G$ is bounded if and only if there exists a finite set $\{\alpha_1,\ldots, \alpha_k\}$ of indices such that  $B\subseteq B_{{\alpha}_1}\times\ldots\times B_{{\alpha}_k}\times\prod_{\beta\in \Lambda-\{\alpha_1,\ldots,\alpha_k\}}\{e_{\beta}\}$, where $e_{\beta}$ denotes the identity element of $G_{\beta}$.
\end{theorem}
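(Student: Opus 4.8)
The plan is to prove both implications separately, keeping in mind that the new phenomenon—absent for the product topology of Theorem \ref{10}—is that the box topology forces all but finitely many coordinates of a bounded set to collapse onto the identity. The sufficiency direction is, by contrast, essentially a coordinatewise repetition of the product-topology argument, so I expect the necessity direction to carry all the difficulty.

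For sufficiency, I would assume $B \subseteq B_{\alpha_1}\times\cdots\times B_{\alpha_k}\times\prod_{\beta\in\Lambda-F}\{e_\beta\}$ with $F=\{\alpha_1,\ldots,\alpha_k\}$ and (as in Theorem \ref{10}) each $B_{\alpha_i}$ bounded. Given an arbitrary basic box neighborhood $U=\prod_\alpha U_\alpha$ of the identity, boundedness of each $B_{\alpha_i}$ yields an $n_i$ with $B_{\alpha_i}\subseteq n_iU_{\alpha_i}$; I then set $n=\max_i n_i$. Two routine observations finish this direction: since $e_\alpha\in U_\alpha$ one has $mU_\alpha\subseteq nU_\alpha$ whenever $m\le n$, so $B_{\alpha_i}\subseteq nU_{\alpha_i}$ for every $i$ while trivially $\{e_\beta\}\subseteq nU_\beta$; and because the group operation on $G$ is coordinatewise, $nU=\prod_\alpha(nU_\alpha)$. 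Hence $B\subseteq\prod_\alpha(nU_\alpha)=nU$.

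For necessity, let $B$ be bounded and write $B_\alpha=\pi_\alpha(B)$ for the projection onto $G_\alpha$. Using the neighborhood $U_\alpha\times\prod_{\gamma\neq\alpha}G_\gamma$—which is legitimate in the box topology as well—the argument of Theorem \ref{10} shows each $B_\alpha$ is bounded and $B\subseteq\prod_\alpha B_\alpha$. What remains, and what is genuinely new, is to prove that $B_\beta=\{e_\beta\}$ for all but finitely many $\beta$; the set $F=\{\beta:B_\beta\neq\{e_\beta\}\}$ would then be the required finite index set. I would argue by contradiction, assuming there are distinct indices $\beta_1,\beta_2,\ldots$ with $B_{\beta_j}\neq\{e_{\beta_j}\}$ and choosing $x_j\in B_{\beta_j}$ with $x_j\neq e_{\beta_j}$.

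The main obstacle, and the only place where the standing Hausdorff hypothesis is essential, is the following separation fact, which I would isolate as a preliminary lemma: in a Hausdorff abelian topological group $H$, for every $x\neq e$ and every $m\in\Bbb N$ there is a neighborhood $U$ of $e$ with $x\notin mU$. I expect to prove it from continuity of the $m$-fold product map $s:H^m\to H$ at $(e,\ldots,e)$: as $H$ is $T_1$, the set $H\setminus\{x\}$ is an open neighborhood of $s(e,\ldots,e)=e$, so its preimage contains a basic neighborhood $U_1\times\cdots\times U_m$, and $U=U_1\cap\cdots\cap U_m$ then satisfies $mU\subseteq H\setminus\{x\}$. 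Granting the lemma, for each $j$ I pick $U_{\beta_j}$ with $x_j\notin jU_{\beta_j}$, fill in the remaining coordinates with arbitrary neighborhoods, and assemble the box neighborhood $U=\prod_\alpha U_\alpha$. Boundedness gives an $n$ with $B\subseteq nU$, whence $B_{\beta_n}\subseteq nU_{\beta_n}$; but $x_n\in B_{\beta_n}$ and $x_n\notin nU_{\beta_n}$ by construction, a contradiction. This forces $F$ to be finite and completes the proof.
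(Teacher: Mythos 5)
Your proof is correct and takes essentially the same route as the paper's: the necessity direction is the same contradiction argument, using Hausdorffness to separate each non-identity coordinate $x_j$ from the identity and continuity of addition to shrink the coordinate neighborhood so that $x_j\notin jU_{\beta_j}$, then assembling a box neighborhood that no multiple $nU$ can absorb. The paper's only organizational difference is that it partitions the infinite set of bad indices into countably many classes $(A_n)$ and demands $nV_\alpha\subseteq U_\alpha$ for $\alpha\in A_n$ (applying boundedness to the auxiliary singleton $\prod_\alpha\{c_\alpha\}$ rather than to $B$ directly), which is just your $x_n\notin nU_{\beta_n}$ bookkeeping in a different guise; like you, it disposes of sufficiency and of the bounded-projection step by the argument of Theorem \ref{10}.
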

\begin{proof}
Suppose $B\subseteq G$ is bounded and there is a net $(c_{\alpha})$ of non-identity elements of $(G_{\alpha})$ such that each $c_{\alpha}$ belongs to a component of $B$. There is a neighborhood $U_{\alpha}$ at identity element $e_{\alpha}$ in $G_{\alpha}$ such that $c_{\alpha} \not\in U_{\alpha}$. Partition the index set to a countable collection $(A_n)$. For each $\alpha \in A_n$, take a neighborhood $V_{\alpha}$ at identity with $nV_{\alpha}\subseteq U_{\alpha}$. Put
\[B_1=\prod_{\alpha\in \Lambda} \{c_{\alpha}\}.\]
Obviously, $B_1$ should be bounded in $G$. Now, suppose $V$ is a neighborhood at identity of the form
\[V=\prod_{\alpha\in\Lambda}V_{\alpha}.\]
It is not a difficult job to see that there is no $M>0$ with $B_1\subseteq M V$; for, in this case, $c_{\alpha}\in M V_{\alpha}\subseteq nV_{\alpha}\subseteq U_{\alpha}$, a contradiction. Therefore, for all but finitely many components, $B$ should have identity elements.
Also, by a similar argument that we had in the first direction of the proof of Theorem \ref{10}, we conclude that for some $\{\alpha_1,\ldots,\alpha_k\}$, $B\subseteq B_{{\alpha}_1}\times\ldots\times B_{{\alpha}_k}\times \prod_{\beta\in \Lambda-\{\alpha_1,\ldots,\alpha_k\}}\{e_{\beta}\}$, in which, each $B_{{\alpha}_i}$ is bounded in $G_{{\alpha}_i}$. The other direction is trivial.
\end{proof}
\begin{remark}
Considering the proof of Theorem \ref{300}, we conclude that in the box topology, singletons are never bounded so that in such spaces compact sets are not bounded in general.
\end{remark}
\begin{corollary}
Let $(G_{\alpha})$ be a family of abelian topological groups and $G=\prod_{\alpha\in \Lambda}G_{\alpha}$ with the box topology. Then singletons are not bounded, in general. In particular, $G$ is never connected by \cite[Chapter III, Theorem 6]{Taq} even when all of $G_{\alpha}'s$ are connected. Nevertheless, consider this point that by \cite[Chapter III, Exercise 8]{Taq}, product topology preserves connectedness.
\end{corollary}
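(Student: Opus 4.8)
The plan is to read everything off the characterisation already obtained in Theorem \ref{300}. Specialising that result to a single point, a singleton $\{x\}$ with $x=(x_{\alpha})_{\alpha\in\Lambda}$ is bounded in the box topology precisely when it is contained in a box of the form $B_{\alpha_1}\times\cdots\times B_{\alpha_k}\times\prod_{\beta\in\Lambda-\{\alpha_1,\ldots,\alpha_k\}}\{e_{\beta}\}$; that is, precisely when $x_{\alpha}=e_{\alpha}$ for all but finitely many $\alpha$. So the first step is simply to record this reformulation: $\{x\}$ is bounded if and only if $x$ has only finitely many non-identity coordinates.

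Next I would exhibit an unbounded singleton whenever $\Lambda$ is infinite and the factors are nontrivial. Assuming each $G_{\alpha}$ contains some $c_{\alpha}\neq e_{\alpha}$ (for instance $G_{\alpha}=\Bbb R$ and $c_{\alpha}=1$ over an infinite $\Lambda$), set $x=(c_{\alpha})_{\alpha\in\Lambda}$. Since $x_{\alpha}\neq e_{\alpha}$ for every $\alpha$ while $\Lambda$ is infinite, the reformulation above shows at once that $\{x\}$ is not bounded in $G$. This establishes the first assertion, that singletons need not be bounded.

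The non-connectedness is then the contrapositive of the cited absorption theorem. By \cite[Chapter III, Theorem 6]{Taq}, in a connected abelian topological group every neighbourhood of the identity absorbs the whole group under positive integer powers, so in particular every singleton is bounded there. Hence, were $G$ connected, all its singletons would be bounded, contradicting the unbounded $x$ just produced; therefore $G$ is not connected. Crucially this persists even when each $G_{\alpha}$ is itself connected, the choice $G_{\alpha}=\Bbb R$ furnishing connected factors whose box product is disconnected, whereas \cite[Chapter III, Exercise 8]{Taq} guarantees that the very same product under the product topology stays connected. That contrast is exactly the content of the closing clause.

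I do not expect a genuine obstacle here, since the corollary is essentially a specialisation of Theorem \ref{300} combined with the two cited facts. The only point demanding care is the quantifier hidden in the phrase ``in general'': the construction needs $\Lambda$ infinite (and the factors nontrivial), because for a finite index set the box and product topologies coincide and singletons are bounded. Thus I would phrase the statement as holding for suitable infinite families, with $\Bbb R^{\Bbb N}$ serving as the explicit witness, and flag this hypothesis plainly rather than treat it as a difficulty.
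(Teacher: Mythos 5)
Your proposal is correct and takes essentially the same route as the paper: the corollary there is likewise obtained by specializing Theorem \ref{300} (via the preceding remark) to singletons, producing a point with all coordinates non-identity, and then invoking \cite[Chapter III, Theorem 6]{Taq} contrapositively to rule out connectedness, with \cite[Chapter III, Exercise 8]{Taq} supplying the contrast with the product topology. Your explicit caveat that the witness requires an infinite index set and nontrivial factors is a precision the paper leaves implicit (its ``never bounded'' phrasing tacitly assumes this), but it does not alter the argument.
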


\end{document}